\newtheorem{theorem}{Theorem}
\theoremstyle{statement}
\theoremstyle{definition}
\newtheorem{definition}[theorem]{Definition}
\newtheorem{example}[theorem]{Example}
\theoremstyle{corollary}
\newtheorem{corollary}[theorem]{Corollary}
\theoremstyle{conclusion}
\theoremstyle{proposition}
\newtheorem{proposition}[theorem]{Proposition}
\theoremstyle{property}
\theoremstyle{claim}
\theoremstyle{problem}
\theoremstyle{remark}
\theoremstyle{notation}
\begin{document}
	\title{Some remarks on selectively star-ccc spaces}
	\author{Yuan Sun$^{*}$}

	\address{College of Scienc, Beijing University of Civil Engineering and Architecture, Beijing 102616,China}

	\thanks{*Corresponding author. E-mail address: sunyuan@bucea.edu.cn}
	
	\subjclass[2000]{54D20, 54E35}
	
	\keywords{Chain conditions, Selectively star-ccc, Weakly star-Lindelöf, Star-Lindelöf}
	
	\maketitle

	\begin{abstract}
		In this paper, the author represent a unification and extension of
		concepts previously studied by several authors. By establish connections between the chain condition, selectively star-ccc properties and star-Lindelöf properties, the author provide that selectively $3$-star-ccc equals weakly strongly star-Lindelöf for normal spaces, which gives a negative answer to Question 4.10 raised by Xuan and Song
		in \cite{xuan2}, and use a different method from Xuan and Song to provide that the CCC implies selectively $2$-star-ccc, which gives a positive answer to Problem 4.4 in \cite{bal}. Finally, an example is given to distinguish these properties.
	\end{abstract}

	\section*{1.Introduction}
	
	A systematic study of classical selection principles was started by Scheepers in \cite{sch}. Later, Kočinac applied the star operator in the field of selection principles. He introduced and studied a number of star-selection principles. The survey papers \cite{ko} contain a detailed
	exposition on star selection principles theory. Aurichi introduced and studied a class of spaces which is a selective version of the countable chain condition in \cite{au}. Later, Bal and Kočinac in \cite{bal} defined and studied a star version of selectively ccc which constitute a more general spaces. Recently, Song and Xuan investigated the equivalence and properties of certain of these spaces. Several questions were raised in \cite{xuan2}. Most of these questions remains open.This paper arose in the attempt to analyse these questions.
	
	\section*{2. Notation and terminology}
	
	In this paper, we adopt the notation and terminology in \cite{van}; $\mathbb{N}^+$ denotes the set of strictly positive integers, $\omega$ denotes the first infinite ordinal and $\omega_1$ denotes the first uncountable ordinal. Note that unless otherwise specified, space means simply topological space, and no separation axioms are assumed.
	
	Recall the following definitions.
	
	\begin{definition} 
		A space $X$ has the \emph{countable chain condition} (CCC
		for short) if every pairwise disjoint open family of \(X\) is countable.
		Furthermore, a space \(X\) has the \emph{discrete countable chain
			condition} (DCCC) if every discrete open family of \(X\) is countable.
	\end{definition}
	
	\begin{definition} 
		A space $X$ is said to be \emph{weakly Lindelöf} if for
		every open cover $\mathcal{U}$ of $X$, there is some countable
		subset $\mathcal{V}$ of $\mathcal{U}$ such that $\overline{\bigcup \mathcal{V}}=X$.
	\end{definition}
	
	\begin{definition} (\cite{au}) 
		A space \(X\) is said to be a \emph{selectively ccc} space if for every sequence $\left(\mathcal{A}_n:n\in\omega\right)$ of maximal pairwise disjoint open families of $X$ there is a sequence $\left(A_n\in \mathcal{A}_n:n\in\omega\right)$ such that $\overline{\bigcup_{n\in\omega} A_n}=X$.
	\end{definition}
	
	Recall that if $B\subset X$ and $\mathcal{U}$ is a collection of subsets of $X$, then
	$\operatorname{st}^1\left(B,\mathcal{U}\right)=\bigcup\left\{U \in \mathcal{U}: U \cap B \neq \emptyset\right\}$. Inductively define $\operatorname{st}^{n+1}\left(B,\mathcal{U}\right)=\bigcup\left\{U \in \mathcal{U}: U \cap \operatorname{st}^n\left(B,\mathcal{U}\right) \neq \emptyset\right\}$
	or brevity we will write $\operatorname{st}\left(B,\mathcal{U}\right)$
	for $\operatorname{st}^1\left(B,\mathcal{U}\right)$ and $\operatorname{st}\left(x,\mathcal{U}\right)$ for $\operatorname{st}\left(\{x\},\mathcal{U}\right)$.
	
	\begin{definition} (\cite{bal})
		A topological space $X$ is said to be \emph{selectively $k$-star-ccc} if for every open cover
		$\mathcal{U}$ of $X$ and for every sequence $\left(\mathcal{A}_n:n\in\omega\right)$ of maximal pairwise disjoint open families of $X$, there is a sequence $\left(A_n\in \mathcal{A}_n:n\in\omega\right)$ such that $\operatorname{st}^k\left(\bigcup_{n\in\omega} A_n, \mathcal{U}\right)=X$. For brevity we will write selectively star-ccc for selectively $1$-star-ccc.
	\end{definition}
	
	Clearly, selectively ccc $\Longrightarrow$ selectively star-ccc and selectively $k$-star-ccc implies $\Longrightarrow$ selectively $k+1$-star-ccc for each $k\in \mathbb{N}^+$.
	
	\begin{definition} (\cite{van}) 
		A space $X$ is said to be \emph{$k$-star-Lindelöf} (respectively, \emph{strongly $k$-star-Lindelöf}) if for every open cover $\mathcal{U}$ of \(X\), there is some countable subset $\mathcal{V}$ of $\mathcal{U}$ (respectively, countable subset $B\subset X$) such that $\operatorname{st}^{k}(\bigcup \mathcal{V}, \mathcal{U})=X$ (respectively, $\operatorname{st}^{k}(B, \mathcal{U})=X$). We also write star-Lindelöf for $1$-star-Lindelöf and write strongly
		star-Lindelöf for strongly $1$-star-Lindelöf.
	\end{definition}
	
	Clearly, Lindelöf $\Longrightarrow$ strongly star-Lindelöf $\Longrightarrow$ star-Lindelöf. In general, strongly $k$-star-Lindelöf $\Longrightarrow k$-star-Lindelöf $\Longrightarrow$ strongly $k+1$-star-Lindelöf for each $k\in \mathbb{N}^+$.
	
	\begin{definition}
		A space $X$ is said to be \emph{weakly $k$-star-Lindelöf} (respectively, \emph{weakly strongly $k$-star-Lindelöf}) if for every open cover $\mathcal{U}$ of $X$, there is some countable subset $\mathcal{V}$ of $\mathcal{U}$ (respectively, countable subset $B\subset X$) such that $\overline{\operatorname{st}^k\left(\bigcup \mathcal{V}, \mathcal{U}\right)}=X$ (respectively,
		$\overline{\operatorname{st}^k\left(B, \mathcal{U}\right)}=X$). Write weakly star-Lindelöf for weakly $1$-star-Lindelöf and weakly strongly star-Lindelöf for weakly strongly $1$-star-Lindelöf.
	\end{definition}
	
	Clearly, Lindelöf $\Longrightarrow$ weakly Lindelöf $\Longrightarrow$ weakly strongly star-Lindelöf $\Longrightarrow$ weakly star-Lindelöf, and weakly strongly $k$-star-Lindelöf $\Longrightarrow$ weakly $k$-star-Lindelöf $\Longrightarrow$ weakly strongly $k+1$-star-Lindelöf for each $k\in \mathbb{N}^+$.
	
	In \cite{van}, van Douwen also consider the extension to $\omega$.
	
	\begin{definition} (\cite{van}) 
		A space $X$ is said to be $\omega$-star-Lindelöf (respectively, strongly $\omega$-star-Lindelöf) if for every open cover $\mathcal{U}$ of $X$, there is some $k\in\mathbb{N}^+$ and some countable subset $\mathcal{V}$ of $\mathcal{U}$ (respectively, countable subset $B\subset X$) such that $\operatorname{st}^k\left(\mathcal{V},\mathcal{U}\right)=X$ (respectively, $\operatorname{st}^{k}\left(B, \mathcal{U})=X\right)$.
	\end{definition}
	
	Note that using this style of argument, it is clear that $\omega$-star-Lindelöf equals strongly $\omega$-star-Lindelöf. Based on the notions above, we introduce the following definitions.
	
	\begin{definition}
		A space $X$ is said to be \emph{selectively $\omega$-star-ccc} if for every open cover $\mathcal{U}$ of $X$  and for every sequence $\left(\mathcal{A}_n: n\in\omega\right)$ of
		maximal pairwise disjoint open families of $X$, there is a sequence $\left(A_n\in \mathcal{A}_n:n\in\omega\right)$ and some $k\in\mathbb{N}^+$ such that $\operatorname{st}^k\left(\bigcup_{n\in\omega} A_n, \mathcal{U}\right)=X$.
	\end{definition}
	
	\begin{definition}
		A space $X$ is said to be \emph{weakly $\omega$-star-Lindelöf} (respectively, \emph{weakly strongly $\omega$-star-Lindelöf}) if for every open cover $\mathcal{U}$ of $X$, there is some $k\in\mathbb{N}^+$ and some countable subset $\mathcal{V}$ of $\mathcal{U}$ (respectively, countable subset $B\subset X$ such that $\overline{\operatorname{st}^k\left(\bigcup \mathcal{V}, \mathcal{U}\right)}=X$ (respectively, $\overline{\operatorname{st}^k\left(B, \mathcal{U}\right)}=X$).
	\end{definition}
	
	Clearly, the properties weakly $\omega$-SSL and weakly $\omega$-SL are equivalent.

	\section*{3. general implications}
	The results in this paper follow directly from work in \cite{van, xuan2}. For completeness, we present proofs in our current terminology. It is a well known fact in topology that for any open cover $\mathcal{U}$ of a CCC space $X$, there is a countable $\mathcal{V}\subset\mathcal{U}$ whose
	union is dense in $X$, i.e., every CCC space is weakly Lindelöf. By modifying the proof, we could obtain the following result.
	
	\begin{theorem}
		In a DCCC space $X$, for any open cover $\mathcal{U}$ of $X$ there is a countable $\mathcal{V}\subset\mathcal{U}$ such that $\operatorname{st}\left(\bigcup \mathcal{V},\mathcal{U}\right)$ is dense in $X$, i.e., every DCCC space is weakly star-Lindelöf.
	\end{theorem}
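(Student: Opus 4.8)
The plan is to adapt the standard transfinite-recursion proof that every CCC space is weakly Lindelöf, replacing the union $\bigcup_{\alpha<\beta}U_\alpha$ by its one-step star $\operatorname{st}\!\left(\bigcup_{\alpha<\beta}U_\alpha,\mathcal{U}\right)$ and then upgrading the resulting pairwise-disjoint family of open sets to a \emph{discrete} one, which is exactly what lets the DCCC hypothesis do its work. Fix an open cover $\mathcal{U}$ of $X$. Writing $S_\beta:=\operatorname{st}\!\left(\bigcup_{\alpha<\beta}U_\alpha,\mathcal{U}\right)=\bigcup_{\alpha<\beta}\operatorname{st}(U_\alpha,\mathcal{U})$ and $G_\beta:=X\setminus\overline{S_\beta}$, I recursively choose --- for as long as $G_\beta\neq\emptyset$ --- a point $x_\beta\in G_\beta$ and a member $U_\beta\in\mathcal{U}$ with $x_\beta\in U_\beta$, and I set $W_\beta:=U_\beta\cap G_\beta$, a nonempty open set containing $x_\beta$. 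Since $(S_\beta)_\beta$ is increasing, $W_\beta\subseteq G_\beta$ is disjoint from $\operatorname{st}(U_\gamma,\mathcal{U})$ for every $\gamma<\beta$, so in particular the sets $W_\beta$ are pairwise disjoint; this part simply copies the CCC argument.

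The main point --- and the step I expect to be the real obstacle --- is that $\{W_\beta\}$ is in fact a \emph{discrete} family. Given $p\in X$, choose $U^{*}\in\mathcal{U}$ with $p\in U^{*}$; I claim $U^{*}$ meets at most one $W_\beta$. Indeed, if $\beta^{*}$ is the least index with $U^{*}\cap W_{\beta^{*}}\neq\emptyset$, then $U^{*}\cap U_{\beta^{*}}\neq\emptyset$, and since $U^{*}\in\mathcal{U}$ this forces $U^{*}\subseteq\operatorname{st}(U_{\beta^{*}},\mathcal{U})\subseteq S_\beta\subseteq\overline{S_\beta}$ for every $\beta>\beta^{*}$; hence $U^{*}\cap G_\beta=\emptyset$ and therefore $U^{*}\cap W_\beta=\emptyset$ for all $\beta>\beta^{*}$, while minimality of $\beta^{*}$ excludes the smaller indices. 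So every point of $X$ has an open neighbourhood (namely a cover member) meeting at most one $W_\beta$. It is essential here to take $U^{*}$ \emph{from} $\mathcal{U}$, because that is exactly what produces the inclusion $U^{*}\subseteq\operatorname{st}(U_{\beta^{*}},\mathcal{U})$; this is precisely the improvement over the plain CCC argument, in which one only controls disjointness.

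It remains to harvest the conclusion. The $W_\beta$ are nonempty and pairwise disjoint, hence distinct, so by the DCCC the (discrete) family $\{W_\beta\}$ is countable, and therefore the recursion must terminate at some countable ordinal $\lambda$, i.e.\ $G_\lambda=\emptyset$. Then $\operatorname{st}\!\left(\bigcup_{\alpha<\lambda}U_\alpha,\mathcal{U}\right)=S_\lambda$ is dense in $X$, and $\mathcal{V}:=\{U_\alpha:\alpha<\lambda\}$ is the required countable subfamily of $\mathcal{U}$; the reformulation that every DCCC space is weakly star-Lindelöf is then immediate from the definition of weakly ($1$-)star-Lindelöf.
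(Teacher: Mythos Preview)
Your proof is correct and follows essentially the same transfinite--recursion scheme as the paper: one recursively picks points outside $\overline{\operatorname{st}\!\left(\bigcup_{\alpha<\beta}U_\alpha,\mathcal{U}\right)}$, forms small open sets around them, shows these form a discrete family by testing against members of $\mathcal{U}$, and invokes the DCCC. The only notable difference is that you take $W_\beta=U_\beta\cap G_\beta$ whereas the paper takes the larger set $V_\beta=\operatorname{st}(x_\beta,\mathcal{U})\setminus\overline{S_\beta}$; your choice makes the discreteness step cleaner, since $U^{*}\cap W_{\beta^{*}}\neq\emptyset$ immediately yields $U^{*}\cap U_{\beta^{*}}\neq\emptyset$ and hence $U^{*}\subseteq\operatorname{st}(U_{\beta^{*}},\mathcal{U})\subseteq S_\beta$ for $\beta>\beta^{*}$, whereas with the paper's $V_\beta$ one only gets $U_y$ meeting some member of $\mathcal{U}$ containing $x_\beta$, which need not be the chosen $U_\beta$.
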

	
	\begin{proof}
		Suppose that \(X\) is a DCCC space which is not weakly
		star-Lindelöf. Let \(\mathcal{U}=\left\{U_{\alpha}: \alpha\in S\right\}\) be an open cover
		of \(X\) such that if \(\mathcal{V}\subset\mathcal{U}\) is countable, then \(\overline{\operatorname{st}\left(\bigcup \mathcal{V},\mathcal{U}\right)} \neq X\).For any \(\beta<\omega_1\), pick \(x_{\beta}\in X-\overline{\operatorname{st}\left(\bigcup \left\{U_{\alpha}: \alpha<\beta\right\},\mathcal{U}\right)}\) and let \(V_{\beta}=\operatorname{st}\left(x_{\beta},\mathcal{U}\right) \overline{\operatorname{st}\left(\bigcup \left\{U_{\alpha}: \alpha<\beta\right\},\mathcal{U}\right)}\). Clearly, \(V_{\beta}\) is open and
		\(V_{\beta}\cap V_{\beta^{'}}=\emptyset\) whenever \(\beta \neq \beta^{'}\).
		
		We claim that \(\mathcal{V}=\left\{V_{\beta}: \beta<\omega_1\right\}\) is discrete. Let \(y\in X\) and select \(U_y\in \mathcal{U}\) containing \(y\). If \(U_y\cap V_{\beta}\neq\emptyset\), then
		\(U_y\cap\operatorname{st}\left(x_{\beta},\mathcal{U}\right)\neq\emptyset\) which means that there exists \(U_\beta\in \mathcal{U}\) containing \(x_\beta\) such that \(U_y\cap U_{\beta}\neq\emptyset\), i.e., \(U_y\subset \operatorname{st}\left(\bigcup \left\{U_{\alpha}: \alpha\leqslant\beta\right\},\mathcal{U}\right)\). Then \(x_\gamma\notin U_y\) and hence \(U_y\cap V_{\gamma}=\emptyset\) for each \(\gamma>\beta\). On the other hand, by the definition,
		\(U_y\cap U_{\gamma}=\emptyset\) for each \(\gamma<\beta\). So for every \(y\in X\) there is an open \(U_y\in \mathcal{U}\) containing \(y\) which meets only one element of \(\mathcal{V}\). Hence claim.
		
		\(\mathcal{V}\) is an uncountable discrete collection of nonempty open sets. Therefore \(X\) is not DCCC that is a contradiction, so the result follows.
	\end{proof}
	
	By definitions of weakly star-Lindelöf and star-Lindelöf. It is easy to see that
	
	\begin{proposition} 
		Every weakly Lindelöf space is star-Lindelöf, and every weakly strongly star-Lindelöf space is strongly \(2\)-star-Lindelöf space. In general, Every weakly strongly \(k\)-star-Lindelöf space is strongly \(k+1\)-star-Lindelöf space and every weakly \(k\)-star-Lindelöf space is \(k+1\)-star-Lindelöf space for each \(k\in\mathbb{N}^+\).
	\end{proposition}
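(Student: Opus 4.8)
The plan is to reduce all four assertions to a single elementary observation: \emph{if $C\subseteq X$ is a dense subset and $\mathcal{U}$ is any open cover of $X$, then $\operatorname{st}\left(C,\mathcal{U}\right)=X$}. To see this, fix $x\in X$ and choose $U\in\mathcal{U}$ with $x\in U$; then $U$ is a nonempty open set, so density of $C$ gives $U\cap C\neq\emptyset$, whence $x\in U\subseteq\operatorname{st}\left(C,\mathcal{U}\right)$. Combined with the recursive identity $\operatorname{st}^{n+1}\left(B,\mathcal{U}\right)=\operatorname{st}\left(\operatorname{st}^{n}\left(B,\mathcal{U}\right),\mathcal{U}\right)$ built into the definition of the star operator, this says that whenever $\operatorname{st}^{n}\left(B,\mathcal{U}\right)$ happens to be dense in $X$, one automatically gets $\operatorname{st}^{n+1}\left(B,\mathcal{U}\right)=X$.

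Now I would simply feed the hypotheses into this observation. For weakly Lindelöf: given an open cover $\mathcal{U}$, pick a countable $\mathcal{V}\subseteq\mathcal{U}$ with $\overline{\bigcup\mathcal{V}}=X$ and apply the observation with $C=\bigcup\mathcal{V}$ to obtain $\operatorname{st}\left(\bigcup\mathcal{V},\mathcal{U}\right)=X$, which is exactly star-Lindelöfness. For weakly $k$-star-Lindelöf: given $\mathcal{U}$, pick a countable $\mathcal{V}\subseteq\mathcal{U}$ with $\overline{\operatorname{st}^{k}\left(\bigcup\mathcal{V},\mathcal{U}\right)}=X$ and apply the observation with $C=\operatorname{st}^{k}\left(\bigcup\mathcal{V},\mathcal{U}\right)$; this yields $\operatorname{st}^{k+1}\left(\bigcup\mathcal{V},\mathcal{U}\right)=X$, i.e. $X$ is $(k+1)$-star-Lindelöf. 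The two ``strongly'' statements are handled identically, except that the witness is a countable set $B\subseteq X$ instead of a countable subfamily: from $\overline{\operatorname{st}^{k}\left(B,\mathcal{U}\right)}=X$ the observation gives $\operatorname{st}^{k+1}\left(B,\mathcal{U}\right)=X$, so weakly strongly $k$-star-Lindelöf implies strongly $(k+1)$-star-Lindelöf; the case $k=1$ is precisely the claim that weakly strongly star-Lindelöf implies strongly $2$-star-Lindelöf.

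There is no serious obstacle here; the only point requiring a moment's care is bookkeeping the index shift, namely that the single extra star operation appearing in the conclusion is exactly the one supplied by the density observation, and that it is applied as the \emph{outer} star $\operatorname{st}^{n+1}=\operatorname{st}\circ\operatorname{st}^{n}$ (not $\operatorname{st}^{n}\circ\operatorname{st}$), which is how $\operatorname{st}^{n}$ was defined above. Everything else is a direct substitution.
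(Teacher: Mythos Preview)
Your proof is correct and is precisely the natural argument the paper has in mind: the paper does not actually give a proof of this proposition, only the remark ``By definitions of weakly star-Lindel\"of and star-Lindel\"of it is easy to see that\ldots''. Your density observation $\overline{C}=X\Rightarrow\operatorname{st}(C,\mathcal{U})=X$ together with the recursion $\operatorname{st}^{n+1}=\operatorname{st}\circ\operatorname{st}^{n}$ is exactly what makes this ``easy to see'', and your handling of the four cases and the index bookkeeping is accurate.
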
 
	
	\begin{theorem}
		Every Lindelöf space is selectively star-ccc. In general, \(k\)-star-Lindelöf space is selectively \(k+1\)-star-ccc for each \(k\in\mathbb{N}^+\).
	\end{theorem}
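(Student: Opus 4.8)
The plan is to reduce the statement to the single structural fact that a \emph{maximal} pairwise disjoint family of open sets has dense union, and then push everything through the definition of $k$-star-Lindelöf by a routine star computation. So fix an open cover $\mathcal{U}$ of $X$ and a sequence $\left(\mathcal{A}_n:n\in\omega\right)$ of maximal pairwise disjoint open families of $X$; we must exhibit a selector $\left(A_n\in\mathcal{A}_n:n\in\omega\right)$ with $\operatorname{st}^{k+1}\left(\bigcup_{n\in\omega}A_n,\mathcal{U}\right)=X$. First I would apply $k$-star-Lindelöfness to $\mathcal{U}$ to obtain a countable $\mathcal{V}\subset\mathcal{U}$ with $\operatorname{st}^k\left(\bigcup\mathcal{V},\mathcal{U}\right)=X$, and enumerate $\mathcal{V}=\left\{V_n:n\in\omega\right\}$, discarding empty members and repeating entries in case $\mathcal{V}$ is finite, so that each $V_n$ is a nonempty member of $\mathcal{U}$.

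The key step is the choice of the selector. Since $\mathcal{A}_n$ is a maximal pairwise disjoint family of (nonempty) open sets, $\bigcup\mathcal{A}_n$ is dense in $X$: if some nonempty open set missed $\bigcup\mathcal{A}_n$, it could be adjoined, contradicting maximality. Because $V_n$ is a nonempty open set, density of $\bigcup\mathcal{A}_n$ gives some $A_n\in\mathcal{A}_n$ with $A_n\cap V_n\neq\emptyset$. This defines $\left(A_n\in\mathcal{A}_n:n\in\omega\right)$.

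Finally I would verify $\operatorname{st}^{k+1}\left(\bigcup_{n\in\omega}A_n,\mathcal{U}\right)=X$ by an inclusion chase. For each $n$, since $V_n\in\mathcal{U}$ and $V_n\cap A_n\neq\emptyset$, we have $V_n\subset\operatorname{st}\left(\bigcup_{m\in\omega}A_m,\mathcal{U}\right)$, hence $\bigcup\mathcal{V}\subset\operatorname{st}\left(\bigcup_{m\in\omega}A_m,\mathcal{U}\right)$. Applying the monotone operation $\operatorname{st}^k\left(-,\mathcal{U}\right)$ to both sides yields $X=\operatorname{st}^k\left(\bigcup\mathcal{V},\mathcal{U}\right)\subset\operatorname{st}^k\left(\operatorname{st}\left(\bigcup_{m\in\omega}A_m,\mathcal{U}\right),\mathcal{U}\right)=\operatorname{st}^{k+1}\left(\bigcup_{m\in\omega}A_m,\mathcal{U}\right)$, as required. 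The Lindelöf case is the same argument with $k=0$: take a countable subcover $\mathcal{V}$ in place of the iterated-star-dense family above, so that $\bigcup\mathcal{V}=X$ already forces $\operatorname{st}^1\left(\bigcup_{m\in\omega}A_m,\mathcal{U}\right)=X$.

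I do not expect a genuine obstacle here; the only points requiring care are the bookkeeping when $\mathcal{V}$ is finite (pad the enumeration) and the standing convention that members of a pairwise disjoint open family are nonempty, which is exactly what legitimizes the maximality-implies-density step. One could remark that for $k\geq 1$ it would in fact suffice that each $\bigcup\mathcal{A}_n$ be dense rather than that each $\mathcal{A}_n$ be maximal, but phrasing it via maximal families keeps it aligned with the definition of selectively $k$-star-ccc.
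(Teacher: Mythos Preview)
Your proof is correct and follows essentially the same route as the paper: apply $k$-star-Lindel\"of to the given cover to get a countable $\mathcal{V}=\{V_n\}$, use density of $\bigcup\mathcal{A}_n$ to select $A_n\in\mathcal{A}_n$ meeting $V_n$, and conclude $\bigcup\mathcal{V}\subset\operatorname{st}\left(\bigcup A_n,\mathcal{U}\right)$, whence $X=\operatorname{st}^{k}\left(\bigcup\mathcal{V},\mathcal{U}\right)\subset\operatorname{st}^{k+1}\left(\bigcup A_n,\mathcal{U}\right)$. The only differences are cosmetic: the paper writes out only the case $k=1$ and defers the Lindel\"of case to \cite{bal}, whereas you handle general $k$ directly and add the bookkeeping for finite or partially empty $\mathcal{V}$.
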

	
	\begin{proof} 
		The implication Lindelöf space \(\Longrightarrow\) selectively star-ccc obtained by Bal in \cite{bal}. For the remaining implications, we only prove star-Lindelöf \(\Longrightarrow\)
		\(2\)-star-ccc. The proof of \(k\)-star-Lindelöf \(\Longrightarrow k+1\)-star-ccc for each \(k\in \mathbb{N}^+\) is similar.
		
		For any open cover \(\mathcal{U}\) of \(X\) and any sequence \(\left(\mathcal{A}_n:n\in \mathbb{N}^+\right)\) of maximal cellular open families of \(X\). Since \(X\) is star-Lindelöf space, there is some countable subset \(\mathcal{V}=\left\{U_n: n\in \mathbb{N}^+\right\}\) of \(\mathcal{U}\) such that \(\operatorname{st}\left(\bigcup \mathcal{V}, \mathcal{U}\right)=X\).
		For each \(n\in \mathbb{N}^+\), as \(\bigcup\mathcal{A}_n\) is dense in \(X\) there exists \(A_n\in\mathcal{A}_n\) such that \(U_n\cap A_n\neq\emptyset\). It follows that \(\bigcup\mathcal{V}\subset\operatorname{st}\left(\bigcup_{n\in\mathbb{N}^+} A_n, \mathcal{U}\right)\) and hence \(X=\operatorname{st}\left(\bigcup\mathcal{V}, \mathcal{U}\right)\subset\operatorname{st}^2\left(\bigcup_{n\in\mathbb{N}^+} A_n, \mathcal{U}\right)\), as required.
	\end{proof}
	
	In \cite{xuan2}, Xuan and Song showed that CCC implies selectively \(2\)-star-ccc, which gives a positive answer to Problem 4.4 in \cite{bal}. Now we can obtain their result by
	
	\begin{corollary} 
		Every CCC space is selectively \(2\)-star-ccc.
	\end{corollary}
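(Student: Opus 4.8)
The plan is to simply concatenate the implications already assembled in this section, so no new construction is needed. First I would recall the well-known fact, noted at the beginning of Section~3, that every CCC space is weakly Lindel\"of: given an open cover $\mathcal{U}$, a maximal pairwise disjoint subfamily of the refinement by finite unions from $\mathcal{U}$ (or a standard transfinite recursion choosing new open sets avoiding the closure of previous unions) must be countable by the CCC, and its union is dense. Thus $X$ is weakly Lindel\"of.

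Next I would invoke the Proposition above, which states that every weakly Lindel\"of space is star-Lindel\"of; hence our CCC space $X$ is star-Lindel\"of. Finally I would apply the Theorem above with $k=1$: every $1$-star-Lindel\"of (i.e.\ star-Lindel\"of) space is selectively $2$-star-ccc. Stringing these together, $X$ CCC $\Longrightarrow$ $X$ weakly Lindel\"of $\Longrightarrow$ $X$ star-Lindel\"of $\Longrightarrow$ $X$ selectively $2$-star-ccc, which is the assertion.

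I do not expect a genuine obstacle here; the corollary is essentially bookkeeping on top of the Theorem and Proposition. The only point deserving a word of care is making sure the indices line up: one must use the Theorem in the form ``$k$-star-Lindel\"of $\Longrightarrow$ selectively $(k{+}1)$-star-ccc'' at $k=1$, rather than the weaker ``Lindel\"of $\Longrightarrow$ selectively star-ccc'' case, since a CCC space need not be Lindel\"of and only yields density of the union of a countable subfamily (hence star-Lindel\"of, not Lindel\"of). With that caveat the proof is immediate, and I would present it in two or three sentences citing the Theorem and Proposition.
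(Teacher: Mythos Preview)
Your proposal is correct and follows exactly the same chain of implications as the paper's own proof: CCC $\Longrightarrow$ weakly Lindel\"of (the well-known fact recalled at the start of Section~3) $\Longrightarrow$ star-Lindel\"of (the Proposition) $\Longrightarrow$ selectively $2$-star-ccc (the Theorem with $k=1$). Your remark about needing the $k=1$ case of the Theorem rather than the Lindel\"of case is precisely the point, and the paper handles it the same way.
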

	
	\begin{proof} 
		By Theorem 1, CCC \(\Longrightarrow\) weakly Lindelöf, by Theorem 2, weakly Lindelöf \(\Longrightarrow\) star-Lindelöf, and by Theorem 3, star-Lindelöf space \(\Longrightarrow\) selectively \(2\)-star-ccc. Therefor, we have the implication CCC \(\Longrightarrow\) selectively \(2\)-star-ccc.
	\end{proof}
	
	A similar proof shows that
	
	\begin{corollary} 
		Every DCCC space is selectively \(3\)-star-ccc.
	\end{corollary}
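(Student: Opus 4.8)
The plan is to follow the same route as the proof of the preceding corollary, shifted up by one application of the star operator. Schematically I would establish the chain DCCC $\Longrightarrow$ weakly star-Lindel\"of $\Longrightarrow$ $2$-star-Lindel\"of $\Longrightarrow$ selectively $3$-star-ccc, where the first link is Theorem 1, the second is the Proposition applied with $k=1$, and the third is Theorem 3 applied with $k=2$. Composing the three implications gives the result.

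Unwinding this, let $X$ be a DCCC space, let $\mathcal{U}$ be an open cover of $X$, and let $\left(\mathcal{A}_n:n\in\omega\right)$ be a sequence of maximal pairwise disjoint open families. By Theorem 1 there is a countable $\mathcal{V}=\left\{U_n:n\in\omega\right\}\subset\mathcal{U}$ with $\operatorname{st}\left(\bigcup\mathcal{V},\mathcal{U}\right)$ dense in $X$. Since $\mathcal{U}$ covers $X$, each $x\in X$ lies in some $U\in\mathcal{U}$, and density of $\operatorname{st}\left(\bigcup\mathcal{V},\mathcal{U}\right)$ forces $U\cap\operatorname{st}\left(\bigcup\mathcal{V},\mathcal{U}\right)\neq\emptyset$, whence $x\in\operatorname{st}^2\left(\bigcup\mathcal{V},\mathcal{U}\right)$; that is, $X$ is $2$-star-Lindel\"of (this is exactly the content of the Proposition for $k=1$). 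Now I would run the argument of Theorem 3 with $k=2$: for each $n\in\omega$, density of $\bigcup\mathcal{A}_n$ in $X$ lets me choose $A_n\in\mathcal{A}_n$ with $U_n\cap A_n\neq\emptyset$, so $\bigcup\mathcal{V}\subset\operatorname{st}\left(\bigcup_{n\in\omega}A_n,\mathcal{U}\right)$ and therefore $X=\operatorname{st}^2\left(\bigcup\mathcal{V},\mathcal{U}\right)\subset\operatorname{st}^3\left(\bigcup_{n\in\omega}A_n,\mathcal{U}\right)$, which is precisely selective $3$-star-ccc.

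There is no real obstacle here: every ingredient is already available, and the only point needing attention is the exponent bookkeeping on $\operatorname{st}$ — in particular one should confirm that the proof of Theorem 3 does extend from the case written out there to $k=2$, as claimed at the end of that proof. For a purely citational version, the corollary is immediate by composing Theorem 1, the Proposition, and Theorem 3, with no re-derivation needed.
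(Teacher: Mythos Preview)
Your proposal is correct and follows exactly the route the paper intends: the paper's proof is just the phrase ``A similar proof shows that,'' meaning the chain DCCC $\Rightarrow$ weakly star-Lindel\"of $\Rightarrow$ $2$-star-Lindel\"of $\Rightarrow$ selectively $3$-star-ccc obtained from Theorem~1, the Proposition (with $k=1$), and Theorem~3 (with $k=2$), which is precisely what you wrote out. Your explicit unwinding is a faithful expansion of this composition and contains no gaps.
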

	
	For the converse, we have the following result.
	
	\begin{theorem}
		If \(X\) is regular weakly star-Lindelöf, then \(X\) is DCCC.
	\end{theorem}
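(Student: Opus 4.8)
The plan is to prove the contrapositive: assuming $X$ is regular and fails DCCC, I will construct an open cover of $X$ witnessing that $X$ is not weakly star-Lindelöf. So I start from an uncountable discrete family of nonempty open subsets of $X$ and, reindexing, list $\omega_1$ of its members as $\{V_\alpha:\alpha<\omega_1\}$; being discrete, this family is automatically pairwise disjoint (a point in $V_\alpha\cap V_{\alpha'}$ with $\alpha\neq\alpha'$ would have no neighbourhood meeting only one member). For each $\alpha$ pick $x_\alpha\in V_\alpha$, and here regularity enters: choose an open $G_\alpha$ with $x_\alpha\in G_\alpha\subseteq\overline{G_\alpha}\subseteq V_\alpha$. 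The first observation I would record is that $\{\overline{G_\alpha}:\alpha<\omega_1\}$ is again discrete, since each $\overline{G_\alpha}$ lies inside $V_\alpha$; hence $F:=\bigcup_{\alpha<\omega_1}\overline{G_\alpha}$ is closed (for a discrete family the closure of the union is the union of the closures) and $W:=X\setminus F$ is open.

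Next I would assemble the witnessing cover. For each $y\in W$, using discreteness of $\{V_\alpha\}$ pick an open $N_y$ with $y\in N_y\subseteq W$ that meets at most one $V_\alpha$ (take a small neighbourhood of $y$ meeting at most one $V_\alpha$ and intersect it with $W$), and set $\mathcal{U}=\{V_\alpha:\alpha<\omega_1\}\cup\{N_y:y\in W\}$. This is an open cover of $X$: the $V_\alpha$ cover $F$ because $\overline{G_\alpha}\subseteq V_\alpha$, and the $N_y$ cover $W$. The two design properties I rely on are (i) every $N_y$ is disjoint from each $G_\alpha$, since $N_y\subseteq W$ while $G_\alpha\subseteq\overline{G_\alpha}\subseteq F$; and (ii) every $N_y$ meets at most one of the $V_\alpha$.

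For the core verification, let $\mathcal{V}\subseteq\mathcal{U}$ be countable. Let $C$ be the (countable) set of $\alpha$ with $V_\alpha\in\mathcal{V}$, let $D_0$ be the set of $\alpha$ such that some $N_y\in\mathcal{V}$ meets $V_\alpha$ (countable by (ii)), and pick $\beta\in\omega_1\setminus(C\cup D_0)$. I claim $G_\beta\cap\operatorname{st}\left(\bigcup\mathcal{V},\mathcal{U}\right)=\emptyset$. Indeed, suppose $U\in\mathcal{U}$ meets $G_\beta$. It cannot be any $N_y$, by (i). If it is some $V_\gamma$, then $V_\gamma\cap V_\beta\supseteq V_\gamma\cap G_\beta\neq\emptyset$, so $\gamma=\beta$ by pairwise disjointness; but $V_\beta$ is disjoint from $\bigcup\mathcal{V}$ by the choice of $\beta$ (from $C$ it misses every $V_\alpha\in\mathcal{V}$, from $D_0$ it misses every $N_y\in\mathcal{V}$), so $U$ contributes nothing to the star. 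Hence $\operatorname{st}\left(\bigcup\mathcal{V},\mathcal{U}\right)$ misses the nonempty open set $G_\beta$, so $x_\beta\notin\overline{\operatorname{st}\left(\bigcup\mathcal{V},\mathcal{U}\right)}$. Since $\mathcal{V}$ was an arbitrary countable subfamily, $\mathcal{U}$ shows $X$ is not weakly star-Lindelöf, a contradiction; therefore $X$ is DCCC.

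I expect the only genuine difficulty to be finding the right cover. The naive choice $\{X\setminus F\}\cup\{V_\alpha:\alpha<\omega_1\}$ fails, because the single set $X\setminus F$ typically meets $V_\alpha\setminus\overline{G_\alpha}$ for every $\alpha$, so already its one-step star is all of $X$. Breaking $X\setminus F$ into the small pieces $N_y$, each touching at most one $V_\alpha$, is precisely what keeps the star of any countable subfamily away from all but countably many of the neighbourhoods $G_\beta$; regularity is used only to manufacture the insulators $G_\alpha$ with $\overline{G_\alpha}\subseteq V_\alpha$, which is what makes $F$ closed. Everything else is routine bookkeeping with the star operator.
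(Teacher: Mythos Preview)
Your proof is correct and, in fact, more careful than the paper's. Both arguments start the same way: pass to an uncountable discrete open family, shrink each member by regularity to obtain closures $\overline{G_\alpha}\subseteq V_\alpha$, and observe that $F=\bigcup_\alpha\overline{G_\alpha}$ is closed. The divergence is in the choice of cover. The paper simply takes $\{U_\alpha:\alpha\in S\}\cup\{X\setminus F\}$ and, for a countable subfamily $\mathcal{V}'$, picks $\beta$ with $U_\beta\notin\mathcal{V}'$ and notes $V_\beta\cap\bigcup\mathcal{V}'=\emptyset$; this yields only that $X$ is not weakly Lindel\"of (which is exactly what the paper's last sentence says), not that $X$ is not weakly \emph{star}-Lindel\"of. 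As you yourself point out, the single set $X\setminus F$ can meet every $V_\alpha\setminus\overline{G_\alpha}$, so its one-step star may already be all of $X$, and the paper's cover does not witness failure of the weakly star-Lindel\"of property.

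Your refinement---replacing $X\setminus F$ by small neighbourhoods $N_y\subseteq X\setminus F$ each meeting at most one $V_\alpha$---is precisely what is needed to control the star. With this cover, only $V_\beta$ itself can meet $G_\beta$, and your countable bookkeeping via $C$ and $D_0$ guarantees that $V_\beta$ is disjoint from $\bigcup\mathcal{V}$, so $G_\beta$ is genuinely missed by $\operatorname{st}(\bigcup\mathcal{V},\mathcal{U})$. Thus your argument actually proves the stated theorem, whereas the paper's argument, as written, falls short by one star.
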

	
	\begin{proof}
		Suppose \(X\) is not DCCC and \(\mathcal{U}=\left\{U_\alpha :\alpha\in S\right\}\) is a discrete
		collection of nonempty open sets. Pick \(x_\alpha\in U_\alpha\) arbitrarily and let \(D=\left\{x_\alpha:\alpha\in S\right\}\). Applying the regularity condition such that
		\(x_\alpha\in V_\alpha\subset\overline{V_\alpha}\subset U_\alpha\). Note that \(\left\{\overline{V_{\alpha}}:\alpha\in S\right\}\) is a discrete collection since \(\mathcal{U}\) is discrete and hence \(\mathcal{V}=\left\{U_\alpha: \alpha\in S\right\}\cup\left\{X-\overline{V_{\alpha}}\right\}\) is an open cover of \(X\). For each countable \(\mathcal{V}'\subset \mathcal{V}\) there exists \(U_\beta\notin \mathcal{V}'\), hence \(V_\alpha\cap\bigcup\mathcal{V}'=\emptyset\). Thus \(\mathcal{V}\) is an open cover witnesses that \(X\) is not weakly Lindelöf.
	\end{proof}
	
	In fact, Ikenaga in \cite{iken1,iken2} showed that for regular spaces, the DCCC equals weakly star-Lindelöf equals \(2\)-star-Lindelöf equals \(\omega\)-star-Lindelöf, and all the properties in between. Moreover, van Douwen in {[}3{]} showed that for normal spaces, the DCCC equals
	weakly strongly star-Lindelöf equals strongly \(2\)-star-Lindelöf and all the properties in between. For perfectly normal spaces, the DCCC equals the CCC equals weakly Lindelöf and equals star-Lindelöf and all the properties in between. Consequently, we obtain the following general
	implications.
	
	\begin{corollary} 
		For regular spaces. The properties of DCCC, selectively \(3\)-star-ccc, selectively \(\omega\)-star-ccc, weakly star-Lindelöf, weakly \(\omega\)-star-Lindelöf, weakly strongly \(2\)-star-Lindelöf, weakly strongly \(\omega\)-star-Lindelöf, \(2\)-star-Lindelöf, \(\omega\)-star-Lindelöf, strongly \(3\)-star-Lindelöf,\\ strongly \(\omega\)-star-Lindelöf and all the properties in between are equivalent.
	\end{corollary}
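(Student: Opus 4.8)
The plan is to build a short cycle around DCCC. I would establish two omnibus implications --- first, that DCCC implies every property in the list; second, that every listed property implies $X$ is $\omega$-star-Lindelöf --- and then close the loop using Ikenaga's theorem cited above, that for regular spaces $\omega$-star-Lindelöf coincides with DCCC. Since $\omega$-star-Lindelöf is itself one of the listed properties, the resulting cycle DCCC $\Rightarrow$ (any listed property) $\Rightarrow$ $\omega$-star-Lindelöf $\Rightarrow$ DCCC forces every property in the list, as well as everything sandwiched between any two of them, to be equivalent to DCCC; regularity enters only in the closing arrow.

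For the first omnibus implication, every DCCC space is weakly star-Lindelöf by the theorem proved above, hence $2$-star-Lindelöf by the Proposition (weakly $k$-star-Lindelöf implies $(k+1)$-star-Lindelöf). Feeding these two facts into the routine monotonicities recorded in Section 2 --- $k$-star-Lindelöf $\Rightarrow$ strongly $(k+1)$-star-Lindelöf $\Rightarrow$ $(k+1)$-star-Lindelöf $\Rightarrow$ $\omega$-star-Lindelöf, and weakly $k$-star-Lindelöf $\Rightarrow$ weakly strongly $(k+1)$-star-Lindelöf $\Rightarrow$ weakly $(k+1)$-star-Lindelöf $\Rightarrow$ weakly $\omega$-star-Lindelöf --- one reaches $\omega$-star-Lindelöf, strongly $3$-star-Lindelöf, strongly $\omega$-star-Lindelöf, weakly $\omega$-star-Lindelöf, weakly strongly $2$-star-Lindelöf and weakly strongly $\omega$-star-Lindelöf, together with every property squeezed between these. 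The two selection properties are covered by the corollary that every DCCC space is selectively $3$-star-ccc, followed by the trivial selectively $3$-star-ccc $\Rightarrow$ selectively $\omega$-star-ccc. No separation axiom is used here.

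For the second omnibus implication I would check that each listed property implies $\omega$-star-Lindelöf. For the star-Lindelöf-type entries this is immediate: weakly star-Lindelöf $\Rightarrow$ $2$-star-Lindelöf $\Rightarrow$ $\omega$-star-Lindelöf by the Proposition; weakly $\omega$-star-Lindelöf $\Rightarrow$ $\omega$-star-Lindelöf by rerunning the argument of the Proposition on whatever exponent $k$ the given cover supplies; and $2$-star-Lindelöf, strongly $3$-star-Lindelöf, strongly $\omega$-star-Lindelöf, weakly strongly $2$-star-Lindelöf and weakly strongly $\omega$-star-Lindelöf all collapse to $\omega$-star-Lindelöf through the monotonicities. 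The only entries demanding a genuine argument are selectively $3$-star-ccc and selectively $\omega$-star-ccc, for which I would isolate the auxiliary fact that selectively $k$-star-ccc implies $k$-star-Lindelöf; granting it, selectively $3$-star-ccc $\Rightarrow$ $3$-star-Lindelöf $\Rightarrow$ $\omega$-star-Lindelöf, and selectively $\omega$-star-ccc $\Rightarrow$ $\omega$-star-Lindelöf on letting $k$ depend on the cover.

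Proving that auxiliary fact is where I expect the only real difficulty. Given an open cover $\mathcal{U}$ of $X$, let $\mathcal{B}$ be the collection of all nonempty open sets contained in some member of $\mathcal{U}$, and use Zorn's Lemma to choose a pairwise disjoint $\mathcal{A}\subseteq\mathcal{B}$ that is maximal as such within $\mathcal{B}$. The delicate point is to verify that $\mathcal{A}$ is then a maximal pairwise disjoint open family of $X$, not merely of $\mathcal{B}$: if a nonempty open set $W$ met no member of $\mathcal{A}$, then for any $x\in W$ and any $U\in\mathcal{U}$ with $x\in U$ the set $W\cap U$ would belong to $\mathcal{B}$ and still be disjoint from every member of $\mathcal{A}$, contradicting maximality. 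Applying selectively $k$-star-ccc to $\mathcal{U}$ and the constant sequence $(\mathcal{A}_n=\mathcal{A}:n\in\omega)$ produces $A_n\in\mathcal{A}$ with $\operatorname{st}^{k}(\bigcup_{n\in\omega}A_n,\mathcal{U})=X$; choosing $U_n\in\mathcal{U}$ with $A_n\subseteq U_n$ and putting $\mathcal{V}=\{U_n:n\in\omega\}$ gives $X=\operatorname{st}^{k}(\bigcup_{n\in\omega}A_n,\mathcal{U})\subseteq\operatorname{st}^{k}(\bigcup\mathcal{V},\mathcal{U})$, so $X$ is $k$-star-Lindelöf. With both omnibus implications established and Ikenaga's theorem closing the cycle, the ``and all the properties in between'' clause is automatic: any such property $P$ lies between two listed properties $Q,R$ with $Q\Rightarrow P\Rightarrow R$, and $Q,R$ have just been shown equivalent to DCCC.
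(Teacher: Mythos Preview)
Your proof is correct and follows the same cycle the paper has in mind---pushing everything down to $\omega$-star-Lindel\"of and invoking Ikenaga's theorem to return to DCCC---but the paper states the corollary without proof, as an immediate consequence of Ikenaga's result together with the preceding theorems. The one genuine ingredient you supply that the paper never articulates is the implication selectively $k$-star-ccc $\Rightarrow$ $k$-star-Lindel\"of (and its $\omega$-variant). Without it the selection properties dangle: nothing earlier in the paper explains how selectively $3$-star-ccc or selectively $\omega$-star-ccc feeds back into the star-Lindel\"of hierarchy, so the cycle would not close for those two entries. Your argument---refine $\mathcal{U}$ by a maximal cellular family $\mathcal{A}$, apply the selection hypothesis to the constant sequence $(\mathcal{A})_{n\in\omega}$, and enlarge each selected $A_n$ to a $U_n\in\mathcal{U}$---is the natural one and fills this gap cleanly.
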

	
	\begin{corollary}
		For normal spaces. The properties of DCCC, selectively \(3\)-star-ccc, selectively \(\omega\)-star-ccc, weakly star-Lindelöf, weakly \(\omega\)-star-Lindelöf, weakly strongly star-Lindelöf, weakly strongly \(\omega\)-star-Lindelöf, \(2\)-star-Lindelöf, \(\omega\)-star-Lindelöf, strongly \(2\)-star-Lindelöf, strongly \(\omega\)-star-Lindelöf and all the properties in between are equivalent.
	\end{corollary}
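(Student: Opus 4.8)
The plan is to read this corollary off two facts already in hand: the equivalences for regular spaces in the preceding corollary, and van Douwen's theorem for normal spaces quoted above. Once these are combined, what is left is purely organisational --- checking that the list, together with the clause ``and all the properties in between'', really does form a single equivalence class.

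First I would use that every normal space is regular (or, more conservatively, simply that Theorem~6 and Ikenaga's cited results are stated for regular spaces and van Douwen's for normal spaces), so that the preceding corollary applies to a normal space $X$. That corollary gathers into one equivalence class, say $\mathcal{C}$, the properties DCCC, selectively $3$-star-ccc, selectively $\omega$-star-ccc, weakly star-Lindelöf, weakly $\omega$-star-Lindelöf, weakly strongly $\omega$-star-Lindelöf, $2$-star-Lindelöf, $\omega$-star-Lindelöf, strongly $\omega$-star-Lindelöf, and every property lying between two of them, in particular weakly strongly $2$-star-Lindelöf and strongly $3$-star-Lindelöf. Comparing with the list of the present corollary, the only genuinely new entries are weakly strongly star-Lindelöf (that is, weakly strongly $1$-star-Lindelöf, which is \emph{stronger} than the weakly strongly $2$-star-Lindelöf already in $\mathcal{C}$) and strongly $2$-star-Lindelöf (stronger than the strongly $3$-star-Lindelöf already in $\mathcal{C}$), so it suffices to show these two also lie in $\mathcal{C}$.

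For that I would invoke van Douwen's cited theorem: for normal spaces, DCCC $\Longleftrightarrow$ weakly strongly star-Lindelöf $\Longleftrightarrow$ strongly $2$-star-Lindelöf, together with every property between them. Since DCCC $\in\mathcal{C}$, this at once puts weakly strongly star-Lindelöf and strongly $2$-star-Lindelöf into $\mathcal{C}$; and the trivial implications weakly strongly star-Lindelöf $\Longrightarrow$ weakly star-Lindelöf and strongly $2$-star-Lindelöf $\Longrightarrow$ $2$-star-Lindelöf lead back to members of $\mathcal{C}$, so no implication cycle is left open. Combining this with Proposition~2 and the elementary monotonicity facts (strongly $k$-star-Lindelöf $\Longrightarrow$ $k$-star-Lindelöf $\Longrightarrow$ strongly $(k+1)$-star-Lindelöf, selectively $k$-star-ccc $\Longrightarrow$ selectively $(k+1)$-star-ccc, $k$-star-Lindelöf $\Longrightarrow$ $\omega$-star-Lindelöf, weakly strongly $\omega$-star-Lindelöf $\Longleftrightarrow$ weakly $\omega$-star-Lindelöf, and so on) shows that the whole list of the present corollary, and every property between two of its members, collapses to $\mathcal{C}$.

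The step I expect to demand the most care --- though little real ingenuity --- is the clause ``and all the properties in between'': one must check that every star-covering or star-ccc property lying, in the implication order, between two listed properties is genuinely trapped between two members of $\mathcal{C}$, and, conversely, that no listed property is secretly strictly stronger than DCCC or strictly weaker than the weakest listed one, either of which would break the cycle. In practice this means writing out the Hasse diagram of all these properties restricted to normal spaces --- using Theorem~1, Theorem~3 and its evident $\omega$-analogue $\omega$-star-Lindelöf $\Longrightarrow$ selectively $\omega$-star-ccc (proved exactly as Theorem~3), Corollary~5, Theorem~6, Proposition~2, the monotonicity facts, and the cited theorems of Ikenaga and van Douwen --- and observing that the listed properties occupy one band with DCCC at the top. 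Once that diagram is in place the corollary is immediate, and I anticipate no genuine obstacle beyond this bookkeeping, since every nontrivial implication needed has already been established above.
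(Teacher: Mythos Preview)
Your proposal is correct and essentially matches the paper, which gives no explicit proof of this corollary but simply writes ``Consequently, we obtain the following general implications'' after quoting Ikenaga's regular-space equivalences and van Douwen's normal-space equivalences, leaving to the reader exactly the bookkeeping you describe. Your caution about whether normal implies regular is well placed given the paper's stated convention that no separation axioms are assumed, but in the source \cite{van} normal is taken in the $T_4$ sense, so in context the reduction to the regular-space corollary is legitimate.
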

	
	\begin{corollary}
		For perfectly normal spaces. The properties of CCC, selectively \(2\)-star-ccc, selectively \(\omega\)-star-ccc, weakly Lindelöf, weakly star-Lindelöf, weakly \(\omega\)-star-Lindelöf, weakly strongly star-Lindelöf, weakly strongly \(\omega\)-star-Lindelöf, star-Lindelöf, \(\omega\)-star Lindelöf, strongly \(2\)-star-Lindelöf, strongly \(\omega\)-star-Lindelöf and all the properties in between are equivalent.
	\end{corollary}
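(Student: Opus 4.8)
The plan is to reduce the statement to two inputs already in hand. The first is that a perfectly normal space is normal, so the previous Corollary (for normal spaces) collapses DCCC, selectively $3$-star-ccc, selectively $\omega$-star-ccc, weakly star-Lindelöf, weakly $\omega$-star-Lindelöf, weakly strongly star-Lindelöf, weakly strongly $\omega$-star-Lindelöf, $2$-star-Lindelöf, $\omega$-star-Lindelöf, strongly $2$-star-Lindelöf and strongly $\omega$-star-Lindelöf (and everything between them) into a single property, which I will denote $(\ast)$; note $(\ast)$ includes DCCC. The second is the quoted fact that in a perfectly normal space DCCC, CCC, weakly Lindelöf and star-Lindelöf are all equivalent, so CCC, weakly Lindelöf and star-Lindelöf each coincide with $(\ast)$ as well. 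Granting these, the only remaining named entry to place is selectively $2$-star-ccc, and this, together with the "in between" properties, I would handle purely through the separation-axiom-free implications recorded in Sections~2 and~3, each time closing a cycle of implications through CCC $\Leftrightarrow$ DCCC.

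Concretely, I would fix a perfectly normal space $X$ and argue as follows. Since CCC always implies DCCC --- a discrete family of nonempty open sets is pairwise disjoint --- and the quoted fact supplies the converse for $X$, we get CCC $\Leftrightarrow$ DCCC, hence CCC $\Leftrightarrow (\ast)$. For selectively $2$-star-ccc: the earlier Corollary gives CCC $\Rightarrow$ selectively $2$-star-ccc, while the general implication selectively $2$-star-ccc $\Rightarrow$ selectively $3$-star-ccc lands in $(\ast)$; thus selectively $2$-star-ccc is sandwiched between CCC and $(\ast)$, and since CCC $\Leftrightarrow (\ast)$ the cycle closes. (The same style of sandwich re-proves weakly Lindelöf $\Leftrightarrow (\ast)$ via CCC $\Rightarrow$ weakly Lindelöf $\Rightarrow$ weakly star-Lindelöf, and star-Lindelöf $\Leftrightarrow (\ast)$ via weakly Lindelöf $\Rightarrow$ star-Lindelöf $\Rightarrow$ weakly star-Lindelöf, using the Proposition above and the monotonicity remarks of Section~2 --- so these cases need not rely on the full strength of the quoted fact.) For the properties "in between" --- weakly strongly $k$-star-Lindelöf, weakly $k$-star-Lindelöf, strongly $k$-star-Lindelöf and selectively $k$-star-ccc for the relevant finite $k$ --- the monotonicity remarks of Section~2 together with the Proposition furnish, for each such $P$, members $Q_1,Q_2$ of the now single equivalence class with $Q_1 \Rightarrow P \Rightarrow Q_2$, whence $P$ joins the class.

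I do not expect a real mathematical obstacle here: all the depth of the statement is carried by the two cited inputs (the normal-spaces Corollary and the perfect-normality equivalence of DCCC, CCC, weakly Lindelöf and star-Lindelöf), and what is left is a careful but routine diagram chase. The single point that needs attention is precisely the orientation of the implications: one must verify, entry by entry, that every property named in the statement and every property "between" two of them lies, for arbitrary spaces, below one of CCC, weakly Lindelöf, star-Lindelöf and above a member of $(\ast)$ --- typically above weakly star-Lindelöf --- so that a genuine cycle of implications closes. A reversed arrow at any single entry would break the argument, so this placement is the step I would check most carefully.
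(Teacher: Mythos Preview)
Your proposal is correct and matches the paper's approach: the paper does not give an explicit proof of this corollary but derives it, exactly as you do, by combining the cited van Douwen/Ikenaga equivalences (for perfectly normal spaces, DCCC $\Leftrightarrow$ CCC $\Leftrightarrow$ weakly Lindel\"of $\Leftrightarrow$ star-Lindel\"of) with the normal-spaces corollary and the separation-axiom-free implications established earlier, then sandwiching selectively $2$-star-ccc between CCC and selectively $3$-star-ccc. Your write-up simply makes explicit the diagram chase that the paper leaves to the reader.
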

	
	By the Corollary $2$ above, we could give a negative answer to a Problem raised by Xuan and Song (\cite{xuan2}, Question 4.10); Since selectively \(3\)-star-ccc equals weakly star-Lindelöf for normal space, there does not exist a selectively \(3\)-star-ccc normal space which is not weakly star-Lindelöf. Note that weakly star-Lindelöf is called weakly star countable in \cite{xuan2}.

	\section*{4. counterexample}
	
	In this section, we use an example introduced by Xuan and Song to distinguish these properties.
	
	\begin{example} (\cite{xuan1}, Example 3.7)
		Let 
		$$X=\left(\left[0, \omega_1\right]\times\left[0, \omega_1\right)\right)\cup\left(I\left(\omega_1\right)\times\{\omega_1\}\right),$$ 
		where \(I(\omega_1)\) denotes the set of isolated ordinals in \(\left[0, \omega_1\right]\).
	\end{example}
	
	Xuan and Song showed that \(X\) is a Tychonoff DCCC space. Now we show the followings.
	
	{\bf Claim 1}. 
	\(X\) is a selectively \(3\)-star-ccc which is not
	selectively \(2\)-star-ccc.
	
	\begin{proof}
		The fact \(X\) is selectively \(3\)-star-ccc follows from Corollary 5. To show that \(X\) is not selectively \(2\)-star-ccc, let $$
		\mathcal{U}=\left\{\left[0, \omega_1\right]\times\left[0,\alpha\right]:\alpha<\omega_1\right\}\bigcup\left\{\left\{\gamma\right\}\times\left(\gamma, \omega_1\right]: \gamma\in I\left(\omega_1\right)\right\}, $$
		and for each \(n\in\omega\), let \(\mathcal{A}_n=\left\{\left\{x\right\}:x\in I\left(\omega_1\right)\times I\left(\omega_1\right)\right\}\). Note that \(\mathcal{U}\) is an open cover of \(X\) and \(\left(\mathcal{A}_n:n\in \omega\right)\) is a sequence of maximal pairwise disjoint families of \(X\).
		
		For each sequence \(\left(A_n\in\mathcal{A}_n: n\in \omega\right)\) there exists \(\gamma_0\in I\left(\omega_1\right)\) such that \(\operatorname{st}\left(\bigcup A_n, \mathcal{U}\right)\subset X-\left[\gamma_0, \omega_1\right]\times\left[\gamma_0, \omega_1\right]\). As \(\left\{\gamma_0\right\}\times\left(\gamma_0, \omega_1\right]\) is the unique element of \(\mathcal{U}\) containing point \(\left<\gamma_0, \omega_1\right>\) and \(\left\{\gamma_0\right\}\times\left(\gamma_0, \omega_1\right]\cap\operatorname{st}\left(\bigcup A_n, \mathcal{U}\right)=\emptyset\), hence \(\left<\gamma_0, \omega_1\right>\notin\operatorname{st}^2\left(\bigcup A_n, \mathcal{U}\right)\). It follows that \(X\) is not selectively \(2\)-star-ccc, as required.
	\end{proof}
	
	{\bf Claim 1}.
	\(X\) is a weakly star-Lindelöf which is not weakly strongly star-Lindelöf.
	
	\begin{proof} As \(X\) has the DCCC, it is weakly star-Lindelöf by Theorem 1. \(X\) is not weakly strongly star-Lindelöf follows from the fact that \(X\) is not even strongly \(2\)-star-Lindelöf, as we will shortly see in Claim 3.
	\end{proof}
	
	{\bf Claim 3}.
	\(X\) is a \(2\)-star-Lindelöf space which is not strongly \(2\)-star-Lindelöf.
	
	\begin{proof}
		Since \(X\) is DCCC, it is weakly star-Lindelöf and hence it is \(2\)-star-Lindelöf. Now we show that \(X\) is not strongly \(2\)-star-Lindelöf.
		
		Let \(\mathcal{U}\) be the open cover in Claim 1 above. For each countable subset \(B\subset X\), there exists \(\gamma_0\in I\left(\omega_1\right)\) such that \(\left<\gamma_0, \omega_1\right>\notin B\). As \(\left\{\gamma_0\right\}\times\left(\gamma_0, \omega_1\right]\) is the unique element of \(\mathcal{U}\) containing point \(\left<\gamma_0, \omega_1\right>\) and \(\left\{\gamma_0\right\}\times\left(\gamma_0, \omega_1\right]\cap\operatorname{st}\left(B, \mathcal{U}\right)=\emptyset\), hence \(\left<\gamma_0, \omega_1\right>\notin\operatorname{st}^2\left(B, \mathcal{U}\right)\). It follows that \(X\) is not strongly \(2\)-star-Lindelöf.
	\end{proof}
	
	Note that the above Claims show that \(X\) is a Tychonoff weakly star-Lindelöf (and hence selectively \(3\)-star-ccc space), which is not weakly strongly star-Lindelöf, is not selectively  \(2\)-star-ccc and is not strongly \(2\)-star-Lindelöf.

	{\bf Acknowledgement}. \ \ The authors would like to thank the
	referee for his (or her) valuable  remarks and suggestions which
	greatly improved the paper.

\end{document}